\newtheorem{thm}{Theorem}
\newtheorem{exam}{Example}
\begin{document}

	\author{E. Y. Emelyanov$^{1}$}
	\address{$^1$ Middle East Technical University, 06800 Ankara, Turkey}
	\email{eduard@metu.edu.tr}
	\author{S. G. Gorokhova$^2$}
	\address{$^2$ Sobolev Institute of Mathematics, 630090 Novosibirsk, Russia}
	\email{lanagor71@gmail.com}
	
\bigskip
		
	\keywords{universally complete vector lattice, $uo$-convergence}
	\subjclass[2010]{46A40}
	\date{\today}
	
\title{Unbounded Order Convergence and Universal Completions}

\begin{abstract}
We characterize vector lattices in which unbounded order convergence is eventually order bounded. Among other things, the characterization provides a solution to
\cite[Probl.23]{Az}.
\end{abstract}

\maketitle
\date{\today}

\section{Preliminaries}

Throughout the paper, $X$ stands for a vector lattice and all vector lattices are assumed to be real and Archimedean.
We refer to \cite{AB,Kus,GTX} for unexplained terminology and standard facts on vector lattice theory. 

We recall a few standard definitions and results related to vector lattices. $X$ is said to be Dedekind ($\sigma$-Dedekind) complete
if every order bounded (countable) subset of $X$ has a supremum. A Dedekind complete ($\sigma$-Dedekind complete) $X$ is said to be 
universally ($\sigma$-universally) complete if every pairwise disjoint (countable) subset of $X_+$ has a supremum. 
Every universally complete vector lattices has a weak unit. It is well known that $X$ possesses a unique up to lattice isomorphism 
Dedekind (universal) completion, which will be denoted by $X^\delta$ (by $X^u$). Dealing with the completions, 
we always suppose that $X\subseteq X^\delta\subseteq X^u$, whereas $X^\delta$ sits as an ideal in $X^u$.

A sublattice $Y$ of $X$ is called regular if $y_\alpha\downarrow 0$ in $Y$ implies $y_\alpha\downarrow 0$ in $X$.
$Y$ is said to be order dense if for every $0\ne x\in X_+$ there exists $0\ne y\in Y_+$ such that $y\le x$.
It is well known that ideals and order dense sublattices are regular. Furthermore, $X$ is atomic iff 
it is lattice isomorphic to an order dense sublattice of ${\mathbb{R}^A}$ (cf. \cite[Thm.1.78]{AB}). 

A net $(x_\alpha)_{\alpha\in A}$ in $X$ $o$-converges to $x$ if there exists a net $(y_\gamma)_{\gamma\in\Gamma}$ in $X$ satisfying 
$y_\gamma\downarrow 0$ and for each $\gamma\in\Gamma$ there is $\alpha_\gamma\in A$ with $|x_\alpha-x|\leq y_\gamma$ for all $\alpha\geq\alpha_\gamma$. 
In this case we write $x_\alpha\xrightarrow{o}x$. This definition is used e.g. in \cite{Kus,GTX}. In some of the literature (cf. \cite{AB}) 
a slightly different definition of the order convergence is used, namely a net $(x_\alpha)_{\alpha\in A}$ in $X$ is 
said to be $o$-convergent to $x$ if there exists a net $(z_\alpha)_{\alpha\in A}$ in $X$ such that $z_\alpha\downarrow 0$ and $|x_\alpha-x|\leq z_\alpha$ 
for all $\alpha$. Notice that both notions coincide in the case of order bounded nets in a Dedekind complete vector lattice (cf. \cite[Rem.2.2]{GTX} ).
We refer to \cite{AS} for further discussion of definitions of $o$-convergence. It should be noted that $o$-convergence in $X$ is never topological 
unless $\dim(X)<\infty$ \cite[Thm.1]{DEM} (cf. also \cite[Thm.2]{Gor}).   

A net $x_\alpha$ is $X$ is unbounded order convergent (shortly, $uo$-convergent) to $x\in X$ if $|x_\alpha-x|\wedge y\xrightarrow{o}0$ 
for every $y\in X_+$. In this case we, write $x_\alpha\xrightarrow{uo}x$. Following Nakano \cite{Na}, $uo$-convergence is considered 
as a natural generalization of convergence almost everywhere (see \cite{GTX,LC,Az,Ta} and references therein). Clearly,
$o$-convergence agrees with eventually order bounded $uo$-convergence. By \cite[Thm.3.2]{GTX}, $uo$-convergence passes freely 
between $X$, $X^\delta$, and $X^u$. It was shown in \cite[Cor.3.5]{GTX} that if $X$ has a weak unit $u$, then for a net $x_\alpha$ in $X$, 
$x_\alpha\xrightarrow{uo}x\Leftrightarrow |x_\alpha-x|\wedge u\xrightarrow{o}0$. It was also proved in \cite[Cor.3.12]{GTX} that any 
$uo$-null sequence in $X$ is $o$-null in $X^u$. Accordingly to \cite[Ex.2.6]{LC}, it is no longer true for nets in $\ell_\infty$.
Theorem \ref{thm1} shows that all $uo$-null nets in $X$ are $o$-null in $X^u$ only in the trivial case $\dim(X)<\infty$.
For further purposes, we include the following modification of \cite[Ex.2.6]{LC}. 
\begin{exam}\label{exam-main-0} 
A net $(x_{(n,m)})_{(n,m)\in\mathbb{N}^2}$ in $X=c_{00}$ is defined by 
\[
x_{(n,m)}(k):=
\begin{cases}
		0 &\text{if } k\ne n\wedge m,\\
		n\vee m &\text{if } k=n\wedge m.
\end{cases}
\]
Shortly, $x_{(n,m)}=(n\vee m)\cdot\mathbb{I}_{\{n\wedge m\}}$.
Since $\lim\limits_{(n,m)\to\infty}x_{(n,m)}(k)=0$ for every $k\in\mathbb{N}$, then $x_{(n,m)}\xrightarrow{uo}0$
$($e.g., by \cite[Prop.1.]{DEM}$)$. Suppose $x_{(n,m)}$ is eventually 
order bounded by some $y\in\mathbb{R}^\mathbb{N}$. Then there exists $(n_0,m_0)\in\mathbb{N}^2$ such that
$$
  y\ge x_{(n,m)} \ \ \ \ \ (\forall (n,m)\ge(n_0,m_0)).
$$ 
Since $n\wedge m_0=m_0$ and $(n,m_0)\ge(n_0,m_0)$ for $n\ge n_0\vee m_0$, then
$$
  y(m_0)\ge x_{(n,m_0)}(m_0)=n\vee m_0=n \ \ \ \ \ (\forall n\ge n_0\vee m_0)
$$
which is impossible. Therefore, the $uo$-null net $x_{(n,m)}$ is not eventually order bounded in 
$\mathbb{R}^\mathbb{N}$ and hence is not $o$-convergent in $X^u=(c_{00})^u=\mathbb{R}^\mathbb{N}$.
\end{exam}
Although the $uo$-convergence is not topological in most of important cases (e.g., in $L_1[0,1]$ and in $C[0,1]$), 
it is topological in all atomic vector lattices \cite[Thm.2]{DEM}.
	
A net $x_\alpha$ in $X$ is called $o$-Cauchy ($uo$-Cauchy) if the double net $(x_\alpha-x_\beta)_{(\alpha,\beta)}$ $o$-converges ($uo$-converges) to $0$.
It was noticed in \cite[Lm.2.1]{LC} with a reference to \cite[Prop.5.7]{GTX} that every order bounded positive increasing net in $X$ is $o$-Cauchy.
A net $x_\alpha$ in a Dedekind complete vector lattice with a weak unit $u$ is $uo$-Cauchy iff $\inf_\alpha\sup_{\beta,\gamma\ge\alpha}|x_\beta -x_\gamma|\wedge u=0$ 
\cite[Lm.2.7]{LC}. It is well known that the completeness with respect to the $o$-convergence is equivalent to the Dedekind completeness.
By \cite[Cor.3.12]{GTX}, a sequence $x_n$ in X is $uo$-Cauchy in $X$ iff it is $o$-convergent in $X^u$. In the same paper, authors proved that 
a sequence in a $\sigma$-universally complete vector lattice is $uo$-Cauchy iff it is $o$-convergent \cite[Thm.3.10]{GTX}. Theorem \ref{thm1}
shows that there is no net-version of \cite[Thm.3.10]{GTX} unless $\dim(X)<\infty$. 

In \cite[Thm.3.9]{Ta} (see, also \cite[Thm.28]{Az}), it was shown that a vector lattice is $\sigma$-universally complete iff it is sequentially $uo$-complete.
It was also proved in \cite[Thm.17]{Az} that the $uo$-completeness is equivalent to the universal completeness.
Therefore, there is no need in considering $uo$-completions (sequential $uo$-completions) of vector lattices.

\section{Main result}

We begin with the following generalization of Example \ref{exam-main-0}. Given a nonempty subset $A\subset X$,
$pr_{A}$ stands for the band projection in $X^u$ onto the band in $X^u$ generated by $A$.

\begin{exam}\label{exam-main-1}
In any infinite-dimensional Archimedean vector lattice $X$ there exists a $uo$-null net which is not eventually order bounded in $X^u$. 

As $\dim(X)=\infty$, there is a sequence $e_n$ of pairwise disjoint positive nonzero elements of $X$. Let \ $\mathbb{N}^2$ be the coordinatewise directed set
of pairs of naturals. A net in $X$ is defined via $x_{(n,m)}=(n\vee m)\cdot e_{n\wedge m}$. Since $\{x_{(n,m)}:(n,m)\in\mathbb{N}^2\}\subseteq B_{\{e_k:k\in\mathbb{N}\}}$ and
$$
  \lim\limits_{(n,m)\to\infty}pr_{\{e_k\}}(x_{(n,m)})=\lim\limits_{(n,m)\to\infty}(n\vee m)pr_{\{e_k\}}(e_{n\wedge m})=0 \ \ \ \ (\forall k\in\mathbb{N}),
$$ 
then $x_{(n,m)}\xrightarrow{uo}0$ as $(n,m)\to\infty$ $($e.g., it can be seen by use of \cite[Cor.3.5.]{GTX} for a weak unit $u$ in $X^u$ s.t. $u\wedge e_k=e_k$ for all $k$$)$. 
If $x_{(n,m)}$ is eventually order bounded by some $y\in X^u$, then for some $(n_0,m_0)\in\mathbb{N}^2$ we have 
$$
  y\ge x_{(n,m)} \ \ \ \ \ (\forall (n,m)\ge(n_0,m_0)).
$$ 
Since $n\wedge m_0=m_0$ and $(n,m_0)\ge(n_0,m_0)$ for $n\ge n_0\vee m_0$, then
$$
  y\ge x_{(n,m_0)}=(n\vee m_0)\cdot e_{n\wedge m_0}=(n\vee m_0)\cdot e_{m_0}=n\cdot e_{m_0}>0 \ \ \ \ \ (\forall n\ge n_0\vee m_0)
$$
which is impossible. Therefore, the net \ $x_{(n,m)}$ is not eventually order bounded in $X^u$.
\end{exam}

\begin{thm}\label{thm1} Let $X$ be an Archimedean vector lattice. TFAE:\\
$(1)$\ $\dim(X)<\infty$$;$\\ 
$(2)$\ every $uo$-Cauchy net in $X$ is eventually order bounded in $X^u$$;$\\ 
$(3)$\ every $uo$-Cauchy net in $X$ is $o$-convergent in $X^u$$;$\\ 
$(4)$\ every $uo$-null net in $X$ is $o$-null in $X^u$$;$\\ 
$(5)$\ every $uo$-null net in $X$ is eventually order bounded in $X^u$$;$\\
$(6)$\ every $uo$-convergent net in $X$ is eventually order bounded in $X^u$$;$\\
$(7)$\ every $uo$-convergent net in $X$ is eventually order bounded in $X$$.$
\end{thm}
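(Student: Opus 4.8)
The organizing idea is that condition $(5)$ is the weakest of the seven and that Example~\ref{exam-main-1} already contains the only substantive step: in any infinite-dimensional $X$ it produces a $uo$-null net that is not eventually order bounded in $X^u$. Read contrapositively, this is precisely $(5)\Rightarrow(1)$. The remaining task is to verify $(1)\Rightarrow(3)$ and $(1)\Rightarrow(7)$, and then to let every other condition cascade down to $(5)$, closing the loop.

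For the implications out of $(1)$ I would use that a finite-dimensional Archimedean vector lattice is lattice isomorphic to $\mathbb{R}^n$, which is universally complete, so $X=X^\delta=X^u$ and, being atomic, carries $uo$-convergence as coordinatewise convergence (as used in Example~\ref{exam-main-1} via \cite[Prop.1]{DEM}). Given a $uo$-convergent net $x_\alpha\xrightarrow{uo}x$, each coordinate converges in $\mathbb{R}$; for each of the finitely many coordinates one gets an index beyond which that coordinate is bounded, and by directedness these finitely many indices admit a common upper bound, producing a single $z\in X_+$ dominating a tail of $(|x_\alpha|)$. This is eventual order boundedness in $X=X^u$, i.e. $(7)$ (and hence $(6)$); and since $o$-convergence agrees with eventually order bounded $uo$-convergence, such a net is in fact $o$-convergent in $X^u$. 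For $(3)$ one argues the same way starting from a $uo$-Cauchy net: it is coordinatewise Cauchy, hence coordinatewise convergent by completeness of $\mathbb{R}$, hence eventually order bounded, hence $o$-convergent in $X^u$.

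The descent to $(5)$ is bookkeeping. An $o$-convergent net is eventually order bounded (from $|x_\alpha-x|\le y_{\gamma_0}$ on a tail one gets $|x_\alpha|\le|x|+y_{\gamma_0}$), so $(3)\Rightarrow(2)$. A $uo$-null net is $uo$-Cauchy and in particular $uo$-convergent to $0$, whence $(2)\Rightarrow(5)$ and $(6)\Rightarrow(5)$. Eventual order boundedness in $X$ is a fortiori eventual order boundedness in $X^u$, so $(7)\Rightarrow(6)$. Finally $(4)$ can be extracted from $(3)$: a $uo$-null net is $uo$-Cauchy, hence $o$-convergent in $X^u$ by $(3)$; as $o$-convergence entails $uo$-convergence to the same point and $uo$-limits are unique in Archimedean lattices, the limit must coincide with the original $uo$-limit $0$, giving $(4)$, and then trivially $(4)\Rightarrow(5)$. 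Assembling the chains $(1)\Rightarrow(3)\Rightarrow(2)\Rightarrow(5)\Rightarrow(1)$, $(3)\Rightarrow(4)\Rightarrow(5)$, and $(1)\Rightarrow(7)\Rightarrow(6)\Rightarrow(5)$ yields the equivalence of all seven.

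I expect no genuine obstacle beyond what Example~\ref{exam-main-1} already settles: the real content is the explicit $uo$-null, not eventually bounded net built from a disjoint sequence, and everything after that is formal. The one place demanding a little attention is $(1)\Rightarrow(7)$, where finite-dimensionality is used essentially to merge finitely many coordinatewise, a priori index-dependent bounds into a single bound valid on one common tail; in infinite dimensions precisely this merging fails, which is exactly what Example~\ref{exam-main-1} exploits.
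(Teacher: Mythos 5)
Your proof is correct, and the logical graph you assemble ($(1)\Rightarrow(3)\Rightarrow(2)\Rightarrow(5)\Rightarrow(1)$, $(3)\Rightarrow(4)\Rightarrow(5)$, $(1)\Rightarrow(7)\Rightarrow(6)\Rightarrow(5)$) does make all seven conditions equivalent; like the paper, the only substantive ingredient is Example~\ref{exam-main-1} for $(5)\Rightarrow(1)$. But your route between $(2)$ and $(3)$ is genuinely different from the paper's. The paper proves the implication $(2)\Rightarrow(3)$ directly and in full generality: given a $uo$-Cauchy net in $X$, it is $uo$-Cauchy in $X^u$ by \cite[Thm.3.2]{GTX} (regularity of $X$ in $X^u$), it $uo$-converges to some $y\in X^u$ by Azouzi's theorem \cite[Thm.17]{Az} (universal completeness is equivalent to $uo$-completeness), and then hypothesis $(2)$ upgrades this to $o$-convergence. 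You instead reverse the arrow: you take the trivial direction $(3)\Rightarrow(2)$ ($o$-convergent nets are eventually order bounded) and close the loop by proving $(1)\Rightarrow(3)$ directly, via the classical identification of a finite-dimensional Archimedean vector lattice with $\mathbb{R}^n$, where $uo$-Cauchy means coordinatewise Cauchy and finitely many coordinate tails can be merged. What each approach buys: yours is more elementary and self-contained, avoiding the external completeness theorem of \cite{Az} entirely (only \cite[Thm.3.2]{GTX}-type transfer and uniqueness of $uo$-limits are needed, as in your $(3)\Rightarrow(4)$ step, which matches the paper's); the paper's version, at the cost of citing \cite[Thm.17]{Az}, establishes $(2)\Rightarrow(3)$ as a standalone implication not routed through finite-dimensionality, which is conceptually tied to the solution of \cite[Probl.23]{Az} that motivates the theorem. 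One small point of care in your write-up: in $(3)\Rightarrow(4)$, the uniqueness argument requires knowing that the original $uo$-limit $0$ is also a $uo$-limit in $X^u$, which again uses the transfer of $uo$-convergence from $X$ to $X^u$; you leave this implicit, but so does the paper.
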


\begin{proof}
$(1)\Rightarrow (2)$, $(4)\Rightarrow(5)\Leftrightarrow(6)$, and $(7)\Rightarrow(6)$ are trivial.\\ 
$(2)\Rightarrow (3)$: Suppose $x_\alpha$ is $uo$-Cauchy in $X$. Then $x_\alpha$ is $uo$-Cauchy in $X^u$
by \cite[Thm.3.2]{GTX}, because $X$ is regular in $X^u$. It follows from \cite[Thm.17]{Az} that
$x_\alpha\xrightarrow{uo}y$ for some $y\in X^u$. Since $x_\alpha$ is eventually order bounded in $X^u$ by the assumption,
then $x_\alpha\xrightarrow{o}y$.\\ 
$(3)\Rightarrow (4)$ follows since every $uo$-null net is $uo$-Cauchy, $o$-convergent implies $uo$-convergent, and the
$uo$-limit of any $uo$-convergent net is unique.\\ 
$(5)\Rightarrow (1)$ is Example \ref{exam-main-1}.\\
$(6)\Rightarrow (7)$ follows from the equivalence $(6)\Leftrightarrow(1)$ because $(1)\Rightarrow(7)$ is obvious.
\end{proof}

The equivalence $(1)\Leftrightarrow(7)$ of Theorem \ref{thm1} justifies use of term ``{\it unbounded order convergence}''\
for the $uo$-convergence because the $uo$-conver\-gen\-ce for nets in $X$ is automatically eventually order bounded only if $X$ is finite-dimensional.

The following question ``{\it suppose that $X$ is an arbitrary Dedekind complete but not universally complete vector lattice. 
Is there a $uo$-Cauchy net in $X$ that fails to be $o$-convergent in $X^u$?}'' was asked in \cite[Prob.23]{Az}. 
Since $X\ne X^u$ implies $\dim(X)=\infty$, the equivalence $(1)\Leftrightarrow(3)$ of Theorem \ref{thm1} 
gives a positive answer to this question for an arbitrary non-universally complete Archimedean vector lattice $X$.

\end{document}